\DeclareMathOperator{\secc}{secc}
\DeclareMathOperator{\Ricc}{Ric}
\DeclareMathOperator{\grad}{grad}
\DeclareMathOperator{\Div}{div}
\DeclareMathOperator{\Hess}{Hess}
\title{ Lower bound for the first eigenvalue of a minimally embedded  hypersurface in a Riemannian manifold}
\author{Egor Surkov\footnote{Department of Higher Geometry and Topology, Faculty of Mathematics and Mechanics, Moscow State University, Leninskie Gory, GSP-1, 119991, Moscow, Russia \\ \tt{egor.surkov@math.msu.ru} }}
\date{}
\begin{document}
\maketitle									
\begin{center}
\textbf{Abstract}
\end{center}
We provide a lower bound  for the first eigenvalue of the Laplace-Beltrami operator on a closed orientable hypersurface minimally embedded in an orientable compact  Riemannian manifold with  Ricci curvature bounded below by a positive constant.
\section{Introduction}
Let $M$ be an orientable compact manifold of dimension $n+1$ without boundary with a Riemannian metric $\overline{g}$, and let  $\Sigma$ be a minimally embedded oriented closed hypersurface $\psi :\Sigma \hookrightarrow M$. Consider the metric $g=\psi^{*}\overline{g}$ induced on $\Sigma$ by the embedding. Then we have the Laplace-Beltrami operator on $\Sigma$,
\begin{equation}
\Delta^{\Sigma}_{g}f = -\Div\,\grad f =  -\frac{1}{\sqrt{|g|}} \frac{\partial}{\partial x^{i}}\left(\sqrt{|g|}  g^{ij} \frac{\partial f}{\partial x^{j}}\right) .
\end{equation}
Let us denote the first non-zero eigenvalue of the operator $\Delta^{\Sigma}_{g}$ by $\lambda_{1}(\Sigma,g)$. 

An important problem in Spectral Geometry and Geometric Analysis is to find bounds for the spectrum of Laplace-Beltrami operator in terms of intrinsic or extrinsic geometry of (sub)manifolds. There are several famous works in the case of intrinsic geometry, see  e.g \cite{Cheeger} and \cite{Buser}, that give lower and upper bounds for the first eigenvalue in some geometric terms. In the case of surfaces a significant progress was achieved  in the problem of isoperimetric inequalities for eigenvalues. Upper bounds depending only on topological data for normalized eigenvalues  were given in the works \cite{YiYi}, \cite{Korevaar}, \cite{Karp2} that were later refined. Hence, it is natural to look for a maximal metric for a normalized eigenvalue, see \cite{Hersh}, \cite{LYi}, \cite{Nadir}, \cite{KleinB1}, \cite{KleinB2}, \cite{genus2}. It is important to remark, that this problem was solved completely for all eigenvalues in the case of $\mathbb{S}^{2}$ and $\mathbb{RP}^2$ in the papers \cite{KNPP} and \cite{Karp}. More about critical metrics for eigenvalues can be found in the reviews \cite{Uspekhi}, \cite{TrMatNauk}.

In this work we are interested in the extrinsic case. In particular, the question about the connection between the first eigenvalue of minimally embedded hypersurfaces and the geometry of the ambient manifold is an old  problem. One of the most famous conjectures   is the following Yau conjecture \cite{Yau}. 
\newtheorem*{conjecture}{Yau conjecture}
\begin{conjecture}
Let $M = \mathbb{S}^{n+1}$, the unit sphere, and $\Sigma$ is a minimally embedded oriented closed hypersurface. Then the first non-zero eigenvalue of $\Sigma$ is equal to $n$.
\end{conjecture}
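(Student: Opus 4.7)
The plan is to prove $\lambda_1(\Sigma,g)=n$ by establishing the matching upper bound $\lambda_1(\Sigma,g)\le n$ and lower bound $\lambda_1(\Sigma,g)\ge n$ separately.

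For the upper bound I would use the ambient linear functions as test functions. Denote by $x_1,\ldots,x_{n+2}$ the coordinate functions of $\mathbb{R}^{n+2}\supset\mathbb{S}^{n+1}$ restricted to $\Sigma$. The standard identity $\Hess^{\mathbb{S}^{n+1}} x_i=-x_i\,\bar g$ on the sphere, combined with the submanifold Laplacian formula and the minimality $\vec H_\Sigma=0$, yields
\begin{equation}
\Delta^{\Sigma}_{g}\bigl(x_i|_\Sigma\bigr)=n\,x_i|_\Sigma,\qquad i=1,\ldots,n+2,
\end{equation}
so each $x_i|_\Sigma$ is an eigenfunction with eigenvalue $n$. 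The linear conditions $\sum_i c_i\int_\Sigma x_i\,dV_g=0$ carve out a subspace of dimension at least $n+1$ in $\mathbb{R}^{n+2}$, and for any $(c_i)$ in it the function $\sum c_i x_i|_\Sigma$ is automatically $L^2$-orthogonal to constants on $\Sigma$. At least one such combination is not identically zero on $\Sigma$ (otherwise $\Sigma$ would lie in a linear subspace of $\mathbb{R}^{n+2}$ of codimension $\ge 2$, incompatible with $\dim\Sigma=n$), so inserting it into the Rayleigh quotient gives $\lambda_1(\Sigma,g)\le n$.

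For the lower bound I would apply Reilly's formula on the two components $\Omega^\pm$ of $\mathbb{S}^{n+1}\setminus\Sigma$. Let $\phi$ realize the first eigenvalue and let $f^\pm$ solve $\Delta_{\bar g} f^\pm=0$ in $\Omega^\pm$ with $f^\pm|_\Sigma=\phi$. Using $\Ricc_{\mathbb{S}^{n+1}}=n\,\bar g$ and $\vec H_\Sigma=0$, Reilly's identity reduces on each side to
\begin{equation}
(2\lambda_1-n)\int_\Sigma z^\pm\,\phi\,dV_g \;=\; \int_{\Omega^\pm}\bigl|\Hess f^\pm\bigr|^2\,d\bar V \;-\; \int_\Sigma \sigma\bigl(\grad\phi,\grad\phi\bigr)\,dV_g,
\end{equation}
where $z^\pm=\partial_{\nu^\pm}f^\pm$ and $\sigma$ is the second fundamental form. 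Summing the two identities cancels the $\sigma$-term (the two outward normals being opposite), and the residual $|\Hess f^\pm|^2\ge 0$ delivers the Choi--Wang-type bound $\lambda_1\ge n/2$.

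The hard part, and the main obstacle, is closing the gap from $n/2$ to $n$. The strategy I would pursue is to sharpen the pointwise estimate of $|\Hess f^\pm|^2$ by decomposing the ambient Hessian at $\Sigma$ into its tangential block $\Hess^\Sigma\phi$, its normal--tangential components, and its purely normal piece $\Hess f^\pm(\nu^\pm,\nu^\pm)$, and then to couple this decomposition with the intrinsic Bochner identity for $\phi$ on $\Sigma$, in which the Gauss equation supplies $\Ricc^\Sigma=(n-1)g-\sigma\circ\sigma$ on a minimal hypersurface of $\mathbb{S}^{n+1}$. The aim is to produce on the right-hand side an additional contribution proportional to $\lambda_1\int_\Sigma z^\pm\phi\,dV_g$, which would convert the prefactor $2\lambda_1-n$ into $\lambda_1-n$ and force $\lambda_1\ge n$. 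The remaining technical point is controlling the $\sigma\circ\sigma$ term without invoking convexity or additional curvature hypotheses on $\Sigma$: this is precisely the step at which the general Yau conjecture has resisted resolution, and I expect it to be the decisive difficulty of the argument.
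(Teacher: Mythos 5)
This statement is the Yau conjecture, which the paper explicitly records as an \emph{open problem}: the paper offers no proof of it, and in fact its main theorem only establishes a lower bound of the form $\lambda_1(\Sigma)\geqslant \frac{cn}{2}+\frac{a_n}{\Lambda^6+b_n}$ (which, for $M=\mathbb{S}^{n+1}$, is a small quantitative improvement on Choi--Wang but remains far below $n$ for large $\Lambda$). So you are not being compared against a proof of the conjecture, because none exists.

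Your proposal is honest about this. The two ingredients you set up correctly are standard: the upper bound $\lambda_1(\Sigma)\leqslant n$ via the restricted coordinate functions $x_i|_\Sigma$ (Takahashi's theorem, minimality giving $\Delta^\Sigma x_i=nx_i$, and the dimension count that forces a nontrivial linear combination orthogonal to constants), and the lower bound $\lambda_1(\Sigma)\geqslant n/2$ via Reilly's formula and the harmonic extension (Choi--Wang). Both of these are fine. But the third step --- closing the gap from $n/2$ to $n$ by extracting a contribution proportional to $\lambda_1\int_\Sigma z^\pm\phi$ from the Hessian and Bochner decomposition --- is not carried out; you only sketch an aim and then state, correctly, that controlling the $\sigma\circ\sigma$ term is exactly where every known attempt stalls. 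That is a genuine gap, and it is the entire content of the conjecture. A proof proposal that explicitly says ``I expect this to be the decisive difficulty'' has not proved the statement. It is worth noting that the paper itself pursues a close relative of your instinct --- it does not discard $\int\|\Hess u\|^2$ in the Reilly inequality but instead bounds $\int|\grad u|^2$ by a multiple of $\int\|\Hess u\|^2$ using parallel hypersurfaces and a cutoff argument --- yet even with that machinery the gain over $cn/2$ is only $O(\Lambda^{-6})$, nowhere near the conjectured $n$. So your diagnosis of where the difficulty lies matches the literature, but your write-up is a research plan, not a proof, and should not be presented as one.
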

Yau's conjecture is still open, but there are several papers where the conjecture is proven in special cases, see e.g. \cite{ChoiSorrer}, \cite{Symmetric21}, \cite{Symmetric1}.

Let us fix notation and conventions. The sectional curvature of the manifold $M$ is denoted by $\secc_{M}$, the Ricci curvature tensor is denoted by $\Ricc_{M}$, the Riemann curvature tensor is denoted by $$R_{X,Y}Z = \nabla_X\nabla_Y Z - \nabla_Y\nabla_X Z - \nabla_{[X,Y]} Z.$$ We  denote the second fundamental form by $B\in \Gamma(Hom(T\Sigma \otimes T\Sigma , N\Sigma))$. We also need the mean curvature normal field $\overrightarrow{H_\Sigma} = \sum\limits_{i=1}^{n}B(e_{i},e_{i})$, where $e_{i}$ is a local orthonormal frame in the sections of $T\Sigma$. We also define mean curvature $H_\Sigma$ by a formula $\overrightarrow{H_\Sigma} = H_\Sigma\nu$, where $\nu$ is a normal field to $\Sigma$. We consider $M$ and $\Sigma$ with a chosen orientation, this choice becomes clear in  Section 3. The projections of the field $X \in \Gamma(TM)$ onto the tangent and the normal bundles of the submanifold $\Sigma$ is denoted by $X^T$ and $X^N$, respectively. We need the Hessian $\Hess w = \nabla dw \in \Gamma(TM \otimes TM)$, where $w \in C^{\infty}(M)$. Differential operators on $\Sigma$ have $\Sigma$ as a superscript, for example, the Laplacian on $M$ will be written as $\Delta$, and the Laplacian on $\Sigma$ will be written as $\Delta^{\Sigma}$.
 
The formula $\Ricc_{M} \geqslant k$ means that for any $s \in \Gamma(TM)$ we have $\Ricc_{M}(s,s) \geqslant k\cdot \overline{g}(s,s)$. 

Approaching the Yau conjecture, Choi and Wang proved the following result. Let $\Ricc_{M} \geqslant q > 0$. It is shown in the paper \cite{ChoiWang} that in this case one has
\begin{equation}\label{Formule1}
\lambda_{1}(\Sigma)  \geqslant \frac{q}{2}.
\end{equation}

Remark that in the case of the unite sphere $\mathbb{S}^{n+1}$ it is known, that the first eigenvalue of $\Sigma$ is less or equal to $n$. And it is clear that the finding a good lower bound is enough to prove the Yau conjecture. In a case similar to Choi-Wang setting there are some results  concerning an upper bound for the first eigenvalue, see \cite{Heintz}.

In the case when the ambient manifold is the Euclidean space there are  famous Reilly's results and their various generalizations, see \cite{UppRei}, and in the case when ambient manifold is a space form,  see \cite{SpaceForm}.

Further, in the case when $M = \mathbb{S}^{n+1}$, the unit sphere, it is shown  in the work \cite{SyreSpruckDuncan}, using the methods of the theory of tubular neighborhoods, that there exist such constants
\begin{equation}
    a_{n} \geqslant \frac{(n-1)n^{2}}{3200},
  \qquad
   b_{n} \leqslant \frac{5n^{2}}{720},
 \end{equation}
that the first eigenvalue of the hypersurface $\Sigma$ is bounded as follows,
\begin{equation}\label{Formule2}
\lambda_{1}(\Sigma) \geqslant \frac{n}{2} + \frac{a_n}{\Lambda^6 + b_n},
\end{equation}
where $\Lambda =  \sup\limits_{\Sigma}{\vert \vert B \vert \vert}$.

In this paper we step aside from the context of the Yau conjecture and  we generalize the result of \cite{SyreSpruckDuncan} to a wider class of manifolds. We assume that $M$ satisfies the following condition,
\begin{equation}\label{Condition}
   c= \inf_{\substack{p\in M \\ \sigma \in T_{p}M}}\secc_{M}(\sigma) > 0,
 \end{equation}
 where $\sigma$ is a two-dimensional plane in $T_{p}M$.

 From condition \eqref{Condition} it  follows immediately that $ \Ricc_M \geqslant cn$. Let
 \begin{equation}\label{Corrolary}
 b= \sup_{\substack{p\in M \\ \sigma \in T_{p}M}}\secc_{M}(\sigma).
 \end{equation}
 The main result of this paper is the following theorem.
 \newtheorem*{theorem}{Theorem 1.1.1}
 \begin{theorem} Let $M$ be a compact oriented Riemannian manifold of dimension $n+1$ without boundary  satisfying condition \eqref{Condition}, and let $\Sigma$ be a closed orientable hypersurface minimally embedded in $M$.
 Then there exists such constants  \begin{align}
  \begin{aligned}
\varepsilon \in \left(0\,,\,\frac{\Lambda}{2}\right),
  \end{aligned}
  &&
  \begin{aligned}
\beta > 0,
  \end{aligned} 
  \end{align}
  that for 
 \begin{align}
   \begin{aligned}
 \widetilde{\varepsilon} = \left( \frac{\varepsilon}{\sqrt{b} - \sqrt{c}\frac{\varepsilon}{\Lambda}}\right) \left(1 + \frac{bn}{\Lambda^2}\right)\sqrt{c},
 \end{aligned}
&&
  \begin{aligned}
\delta = \frac{c\,n}{\sqrt{b}}\arctan\left(\frac{\varepsilon}{n\sqrt{c}}\right),
  \end{aligned}
 \end{align}
 we have 
  
  \begin{align}
\gamma = \sqrt{2nc} -\widetilde{\varepsilon}-\beta > 0,
   \end{align}
   and for constants defined as
   \begin{align}
  \begin{aligned}
  a_n=\frac{(cn-1)\delta^3\gamma}{64},
  \end{aligned}
  &&
  \begin{aligned}
  b_n=\frac{(cn-1)\delta^3}{32\beta},
  \end{aligned}
 \end{align}
  the following lower bound holds,
 \begin{equation}\label{Main}
\lambda_{1}(\Sigma) \geqslant \frac{cn}{2} + \frac{a_n}{\Lambda^6 + b_n},
\end{equation}
where $\Lambda = \sup\limits_{\Sigma}{\vert \vert B \vert \vert}$.
 In addition, we can choose  $\beta$ and $\varepsilon$ in such a way that  the following estimates holds,
 \begin{equation}\label{Formule267}
a_n \geqslant \frac{3(cn-1)(c \cdot n)^{7/2}}{b^{3/2}\,6400}\arctan^3\left(\frac{\varepsilon}{\sqrt{c} \,n}\right),
\end{equation}
\begin{equation}\label{Formule268}
b_n \leqslant \frac{5(cn-1)(c\cdot n)^{5/2}}{b^{3/2}\,8}\arctan^3\left(\frac{\varepsilon}{\sqrt{c}\, n}\right),
\end{equation}
\begin{equation}\label{Formule269}
 \text{where  }  \varepsilon= \frac{2(2\sqrt{b}-\sqrt{c})\sqrt{n}}{3\sqrt{b}\frac{bc+1}{b}}.
\end{equation}
\end{theorem}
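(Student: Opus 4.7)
The plan is to combine the Choi--Wang argument, which gives the baseline $\lambda_{1}(\Sigma)\geqslant cn/2$ from $\Ricc_{M}\geqslant cn$, with a quantitative improvement obtained by analysing a tubular neighbourhood of $\Sigma$ in $M$, in the spirit of \cite{SyreSpruckDuncan} but with the two-sided sectional bound \eqref{Condition}, \eqref{Corrolary} replacing the unit-sphere geometry. The lower curvature $c>0$ governs the Bochner/Reilly integral identity on $M$, while the upper bound $b$ enters only through Rauch comparison, controlling the rate at which the shape operator of parallel hypersurfaces develops a singularity and hence the thickness $\delta$ of the usable tube.

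First I would fix a first non-zero eigenfunction $f$ on $\Sigma$, extend it to a tubular neighbourhood of $\Sigma$ via the normal exponential map, and apply the standard Reilly-type identity for the extended function. Minimality of $\psi$ kills the mean-curvature boundary term and the hypothesis $\Ricc_{M}\geqslant cn$ reproduces $\lambda_{1}\geqslant cn/2$ once the non-negative $|\Hess w|^{2}$ contribution is discarded; to go further one must retain a fraction of $|\Hess w|^{2}$. For each $t$ in the tube the shape operator $A_{t}$ of $\Sigma_{t}=\exp(t\nu)(\Sigma)$ satisfies the Riccati equation $\partial_{t}A_{t}=-A_{t}^{2}-R(\cdot,\nu)\nu$, so scalar Riccati comparison against the round sphere of curvature $b$ shows that $\|A_{t}\|$ remains finite on an interval of length at least $\tfrac{1}{\sqrt{b}}\arctan(\varepsilon/(n\sqrt{c}))$ whenever $\|A_{0}\|=\|B\|\leqslant\varepsilon$. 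This is precisely the origin of the constant $\delta$ and of the $\arctan$ in \eqref{Formule267}--\eqref{Formule269}.

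The final step is to split $\Sigma$ into a good set $\{\|B\|\leqslant\varepsilon\}$ and a bad set $\{\|B\|>\varepsilon\}$. On the good set the tube of thickness $\delta$ yields a uniform surplus proportional to $\gamma\delta^{3}(cn-1)/64$ in the Reilly identity; on the bad set the loss is estimated in terms of $\Lambda$ and then traded against the good contribution using the elementary Young-type inequality $ab\leqslant\tfrac{\beta}{2}a^{2}+\tfrac{1}{2\beta}b^{2}$, which is where the free parameter $\beta$ and the denominator $\Lambda^{6}+b_{n}$ appear. The decomposition $\sqrt{2nc}=\widetilde{\varepsilon}+\beta+\gamma$ is then balanced by $\widetilde{\varepsilon}=\beta=\gamma=\tfrac{1}{3}\sqrt{2nc}$, which, together with the explicit expression for $\widetilde{\varepsilon}$ in terms of $\varepsilon$, forces the ansatz \eqref{Formule269} and produces \eqref{Formule267}, \eqref{Formule268}. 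The main obstacle, and the step where I expect the most delicate bookkeeping, is the Riccati/Jacobi comparison of the previous paragraph: one has to keep the dependence on $c$, $b$, $n$, $\varepsilon$ explicit through every inequality so that the resulting numerator in \eqref{Main} decays like $\Lambda^{-6}$ rather than like a higher negative power of $\Lambda$.
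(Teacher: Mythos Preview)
Your sketch misses the actual mechanism of the proof in several essential places.

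\textbf{The extension.} The eigenfunction is not ``extended to a tubular neighbourhood via the normal exponential map''; it is extended \emph{harmonically} to one of the two domains $M_{1}$ bounded by $\Sigma$ (which exist because $\Ricc_{M}>0$ forces $H_{n}(M)=0$). Harmonicity is what makes the Reilly identity collapse to
\[
2\Bigl(\lambda_{1}-\tfrac{cn}{2}\Bigr)\int_{M_{1}}|\grad u|^{2}\,dv^{g}\geqslant\int_{M_{1}}\|\Hess u\|^{2}\,dv^{g},
\]
and it is equally indispensable for the Bochner step below. An extension along normal geodesics is never used.

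\textbf{No good/bad decomposition.} There is no splitting of $\Sigma$ into $\{\|B\|\leqslant\varepsilon\}$ and $\{\|B\|>\varepsilon\}$. The parameter $\varepsilon\in(0,\Lambda/2)$ enters only through the tube radius $D_{\varepsilon}=\tfrac{1}{\sqrt{b}}\arctan(\sqrt{c}\,\varepsilon\Lambda^{-2})$; the whole surface is handled uniformly, and $\Lambda$ always denotes $\sup_{\Sigma}\|B\|$. The Riccati comparison is used globally to bound $H_{\Sigma_{t}}\leqslant\widetilde{\varepsilon}$ (Lemma~3.2.1) and, combined with a surface-to-domain inequality $\sqrt{2nc}\int_{M_{1}}|\grad u|^{2}\leqslant\int_{\Sigma}|\grad u|^{2}$ (Cauchy--Schwarz plus Choi--Wang), yields the first auxiliary inequality
\[
\int_{M_{1}}|\grad u|^{2}\leqslant\frac{2\Lambda^{2}}{\delta\gamma}\int_{M_{1}^{T}\setminus M_{1}^{2T}}|\grad u|^{2}+\frac{1}{\gamma\beta}\int_{M_{1}}\|\Hess u\|^{2},\qquad T=\frac{\delta}{2\Lambda^{2}}.
\]

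\textbf{The missing ingredient.} You never produce the second auxiliary proposition, and without it the scheme cannot close. This is a Caccioppoli-type interior estimate: multiply Bochner's inequality $-\Delta|\grad u|^{2}\geqslant 2\|\Hess u\|^{2}+2cn|\grad u|^{2}$ by a cutoff $\rho^{2}$ supported in $M_{1}$ with $\rho\equiv 1$ on $M_{1}^{T}$ and $|\grad\rho|\lesssim T^{-1}$, integrate by parts, and absorb to obtain
\[
\int_{M_{1}^{T}}|\grad u|^{2}\leqslant\frac{1}{cn-1}\cdot\frac{16\Lambda^{4}}{\delta^{2}}\int_{M_{1}}\|\Hess u\|^{2}.
\]
This is where the factor $(cn-1)$ and four of the six powers of $\Lambda$ in the denominator come from. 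Substituting this into the first auxiliary inequality gives $\int_{M_{1}}|\grad u|^{2}\leqslant\bigl(\tfrac{32\Lambda^{6}}{(cn-1)\delta^{3}\gamma}+\tfrac{1}{\beta\gamma}\bigr)\int_{M_{1}}\|\Hess u\|^{2}$, and plugging this back into the Reilly inequality above yields \eqref{Main} directly.

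\textbf{Constants.} The balancing $\widetilde{\varepsilon}=\beta=\gamma=\tfrac{1}{3}\sqrt{2nc}$ is not what is done; the paper takes $\beta=\sqrt{nc}/20$ and the specific $\varepsilon$ in \eqref{Formule269}, which gives $\gamma\geqslant\tfrac{3}{100}\sqrt{nc}$ and the stated bounds \eqref{Formule267}--\eqref{Formule268}.
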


\section{Preliminaries}
\subsection{Parallel hypersurfaces}
For a hypersurface $\Sigma$ embedded in $M,$ consider the restriction $exp^{N\Sigma}$ of the exponential map from $TM$ to $N\Sigma \subset TM$. We define the following family of maps 
\begin{equation}\label{Formule3}
\Phi_t: \Sigma \longmapsto M, \quad \Phi_t(p) = exp^{N\Sigma}(p,t\nu_{p}) \in M,
\end{equation}
where $\nu$ is the unit normal field to the surface $\Sigma$, induced by the orientation of the surface. The choice of orientation on $\Sigma$ and $M$ is described in Section 3. 

Let $\varkappa_{i}$ be the principal curvatures of the surface $\Sigma$. We introduce $\varkappa_{max} = \max\limits_{p\in \Sigma, \, i \in 1...n}\{\varkappa_{i}|_{p}\}$ and $\varkappa =\frac{1}{ \sqrt{c}}\arctan(\sqrt{c}\,\varkappa_{max}^{-1})$. Then the general theory of tubular neighborhoods \cite[Lemma 8.20]{Gray} says that for any $t \in {(-\varkappa,\varkappa)}$ the map $\Phi_{t}$ is an immersion.

We denote by $\varkappa_{i}(t)$  the $i$-th principal curvature of the surface $\Phi_t(\Sigma)$. It is clear  that $\varkappa_{i}(0)$ coincides with $\varkappa_{i}$.

Moreover, it is known that for any $ t_{1} \in {[0,\varkappa)}$ the principal curvatures of the hypersurface $\Phi_{t_1}(\Sigma)$ at a point  $\Phi_{t_1}(p)$ has the following upper bound \cite[Lemma 8.21]{Gray},
\begin{equation}
\label{Napp}
\varkappa_{i}(t_1)|_{\Phi_{t_1}(p)} \leqslant \frac{\sqrt{b}\cdot \tan(\sqrt{b} \, t_1)+ \varkappa_{i}|_{p}} {1- \frac{\varkappa_{i}|_{p}}{\sqrt{b}}\cdot \tan(\sqrt{b} \, t_1)}.
\end{equation}

In order to simplify notation, we  denote the image of $\Phi_t(\Sigma)$ by $\Sigma_t$. Unfortunately, in the general case, it is not true that the mapping $\Phi_t$ is an embedding for all values of $t$ for which it is defined. However, if we impose restrictions on the curvature of the surface $\Sigma$, then the following lemma holds.

Here mean convexity means that $H \geqslant 0$ and strictly mean convexity means that $H > 0$.

\newtheorem*{lemma}{Lemma 2.1.1} 
\begin{lemma} Let $\Sigma \hookrightarrow M$ be an embedded closed mean-convex oriented hypersurface and $\secc_M \geqslant c > 0$, then for any $t \in (-\varkappa;\varkappa)$, where $\varkappa =\frac{1}{ \sqrt{c}}\arctan(\sqrt{c}\,\varkappa_{max}^{-1})$, the hypersurface $\Sigma_{t}$ is embedded, and for $t \in {(-\varkappa;\varkappa) \setminus \{0\}}$ it is strictly mean-convex.
\end{lemma}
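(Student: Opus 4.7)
The statement has two claims: that $\Sigma_t$ is embedded for all $t \in (-\varkappa,\varkappa)$, and that it is strictly mean-convex for $t \neq 0$. I would treat them in this order.

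\emph{Strict mean-convexity.} The starting point is the Riccati evolution for the shape operator along the normal family $\Phi_t$, obtained by differentiating the Jacobi fields generating the variation in $t$: for a principal direction $e_i$ at $p$, parallel-transported to $e_i(t)$ along $\gamma_p(t)=\Phi_t(p)$, one has
\[
\varkappa_i'(t) = \varkappa_i(t)^2 + \secc_M\!\bigl(e_i(t),\nu(t)\bigr).
\]
This is the same ODE whose comparison with sectional curvature $\leq b$ yields the upper bound \eqref{Napp}; here I use the lower bound $\secc_M \geq c$ to conclude $\varkappa_i'(t) \geq c > 0$. Summing over $i$,
\[
H_{\Sigma_t}'(t) = |A(t)|^2 + \Ricc_M\!\bigl(\nu(t),\nu(t)\bigr) \geq nc > 0,
\]
so $t \mapsto H_{\Sigma_t}$ is strictly increasing. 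Since $H_{\Sigma} \geq 0$ by mean-convexity, this gives $H_{\Sigma_t} > 0$ for $t \in (0,\varkappa)$. For $t \in (-\varkappa, 0)$ I would apply the same monotonicity to the flow re-parametrized in the opposite direction via the orientation $-\nu$, using the symmetry of the Riccati equation under $t \mapsto -t$, $S \mapsto -S$.

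\emph{Embedding.} Gray's Lemma 8.20 supplies the immersion property, so only injectivity is at stake. I would use a continuity/openness argument in $t$: set
\[
T_+ = \sup\bigl\{\tau \in [0,\varkappa) : \Phi\,\text{is injective on}\,\Sigma \times [0,\tau]\bigr\}.
\]
The tubular neighborhood theorem applied at $t = 0$ gives $T_+ > 0$. Suppose $T_+ < \varkappa$. By compactness of $\Sigma$ one extracts distinct points $p \neq q \in \Sigma$ and $x \in M$ with $\Phi_{T_+}(p) = \Phi_{T_+}(q) = x$, so that $\gamma_p$ and $\gamma_q$ both realize the distance from $x$ to $\Sigma$ in time $T_+$. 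A standard first-variation argument then forces the foot of the distance-minimizing geodesic to be non-unique, and one slides the foot along $\Sigma_s$ for $s$ slightly less than $T_+$ using the already-established strict mean-convexity of intermediate parallel surfaces to obtain a focal point before time $\varkappa$, contradicting Gray's Lemma 8.20 on $(-\varkappa,\varkappa)$. The range $t < 0$ is handled symmetrically.

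\emph{Main obstacle.} I expect the most delicate step to be the embedding half — specifically, excluding non-focal ``cut locus'' coincidences of two distinct normal geodesics from $\Sigma$ before the guaranteed focal distance $\varkappa$. The Riccati bound \eqref{Napp} controls focal behavior pointwise, but turning a hypothetical non-focal crossing into a focal one requires the first-variation argument outlined above, and the key input there is precisely the strict mean-convexity of the intermediate $\Sigma_s$ proved in the first part. If this can be carried out, the embedded and strictly mean-convex statements essentially bootstrap each other out of Gray's immersion result.
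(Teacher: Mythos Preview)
Your Riccati argument for strict mean-convexity is essentially the paper's: Gray's comparison lemma (which the paper quotes as formula \eqref{Bond}) is derived from exactly the Riccati inequality $\varkappa_i' \geq \varkappa_i^2 + c$, and both routes give $H_{\Sigma_t} > H_\Sigma \geq 0$ for $t > 0$. Both you and the paper are a bit loose about $t < 0$; your ``symmetry under $\nu \mapsto -\nu$'' requires $-H_\Sigma \geq 0$ as well, which holds in the paper's application ($\Sigma$ minimal) but not for a general mean-convex $\Sigma$.

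The embedding half, however, has a genuine gap. After locating the first coincidence $\Phi_{T_+}(p) = \Phi_{T_+}(q) = x$, you propose to ``slide the foot along $\Sigma_s$ \ldots\ to obtain a focal point before time $\varkappa$, contradicting Gray's Lemma 8.20''. This does not work: the coincidence at $T_+$ is a cut-locus phenomenon (two distinct normal geodesics meeting), not a focal one, and strict mean-convexity of the intermediate $\Sigma_s$ does not manufacture a vanishing Jacobi field along either geodesic. Gray's immersion statement is therefore not contradicted, and no ``standard first-variation'' argument produces a focal point here.

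What actually closes the argument is a maximum-principle step you are missing. The paper first shows (its Step~2, which takes real work with Fermi coordinates and is not automatic) that the two local sheets $\Sigma^p_{T_+}$ and $\Sigma^q_{T_+}$ are \emph{tangent} at $x$ with \emph{opposite} unit normals. Then, since both sheets are strictly mean-convex at $x$, the signed-distance functions satisfy $\Delta\rho_{\Sigma^p_{T_+}} > 0$ and $\Delta\rho_{\Sigma^q_{T_+}} > 0$ near $x$ (via $H = \Delta\rho$), so $\rho_{\Sigma^p_{T_+}} + \rho_{\Sigma^q_{T_+}}$ is nonnegative on the one-sided region $\{\rho_{\Sigma^q_{T_+}} > 0\}$, vanishes at the boundary point $x$, and has zero normal derivative there (because the normals are opposite). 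The Hopf boundary-point lemma then gives the contradiction. You should replace the ``sliding to a focal point'' step with this Hopf-lemma argument, and supply the tangency-with-opposite-normals claim as a separate lemma.
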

\begin{proof}
The proof is in three steps and by reductio ad absurdum.

The first step. 
Let 
\begin{equation}
\label{TubeCond}
t_{*}=\sup\{t \in [0,\varkappa) | \, \forall \tau \in [0,t)  \, \text{the hypersurface} \,  \Sigma_\tau \, \text{is embedded}  \}.
\end{equation}
It is known that for small $t$ the surface $\Sigma_t$ is embedded \cite[Chapter 2]{Gray}, it implies that $t_{*} > 0$. If the assertion of the lemma is not true, then $t_{*} < \varkappa$ and there exists $\varepsilon \in (0,1)$ such that $t_{*}=\frac{1}{\sqrt{c}}\arctan(\varepsilon\sqrt{c}\varkappa_{max}^{-1})$. Since $t_{*} < \varkappa$, it is clear that  $\Phi_{t_{*}}$ is an immersion.

Note that injectivity of an immersion is equivalent to fact that this immersion is an embedding, if we consider the  immersions of compact manifolds. Indeed, a map from a compact space to a Hausdorff space is always closed. A closed and injective map defines a homeomorphism onto its image. Thus, for compact manifolds, the injectivity of an immersion is equivalent to fact that this immersion is an embedding. Hence, if we assume that our immersion  is no more an embedding, then it is no more injective.

 It follows from the definition of $t_{*}$ and the previous discussion, that there exists at least one point $\ x\in \Sigma_{t_{*}}$ such that for some $p,q \in \Sigma$ we have $x= exp^{N\Sigma}(p,t_{*}\nu_{p})=exp^{N\Sigma}(q,t_{*}\nu_{q})$.

Remark that there exist neighborhoods  $U_p$ and $U_q$ of points $p$ and $q$ such that  $\Phi^{p}_{t_{*}} = \Phi_{t_{*}}|_{U_p}$ and $\Phi^{q}_{t_{*}} = \Phi_{t_{*}}|_{U_q}$ satisfy the following properties: 1) the images of these neighborhoods by the  mappings $\Sigma^{p}_{t_*}$ and $\Sigma^{q}_{t_*}$ have at least one common point $x$ and 2) $\Sigma^{p}_{t_*}$ and $\Sigma^{q}_{t_*}$ are graphs over the corresponding neighborhoods in tangent spaces. The second property means that we can choose neighborhoods  $U_p$ and $U_q$ in such a way  that  there exist neighborhoods  in the tangent spaces $\widetilde{U_p} \subset T_p\Sigma$ and $\widetilde{U_q} \subset T_q\Sigma$  diffeomorphic by maps $exp^{\Sigma}_p$ and  $exp^{\Sigma}_q$ to $U_p$ and $U_q$, and the maps $\Phi^{p}_{t^*}$ and $\Phi^{q}_{t_{*}}$  can be considered as defined  on the neighborhoods  $\widetilde{U_p}, \widetilde{U_q}$ identified with $U_p$ and $U_q$.

For an embedded surface $S$ with a chosen unit normal field we consider the signed distance function to the surface $S$. It is well defined in a sufficiently small neighborhood of $S$ see \cite[Chapter III.6]{Chavel}. We denote this function by $\rho_{S}(x)$. Locally the surface is the set of zeros of this function. Note that for an embedded hypersurface  Fermi coordinates are correctly defined on its sufficiently small  neighborhood. It follows that $\nu = \grad \rho_{S}|_{S}$,  for more details see e.g. \cite[Chapter III.6]{Chavel}. Further, for an embedded hypersurface we  consider  the normal field $\nu$  extended as $\grad \rho_{S}$ into some neighborhood in $M$ of the surface $S$.

The second step. Consider the surfaces $\Sigma^{p}_{t_*}$ and $\Sigma^{q}_{t_*}$. 
We want to prove that $\Sigma^{p}_{t_*}$ and $\Sigma^{q}_{t_*}$  touch each other at the point $x$, and their normal vectors  have opposite directions at the point $x$.

First, note that $M$ is a complete metric space. There exists a value  $\alpha > 0$ such that the following condition holds: the surfaces $\Phi_{t_{*} - \alpha}(U_p) = \Sigma_{t_{*} - \alpha}^p$ and $\Phi_{t_{*} - \alpha}(U_q) = \Sigma_{t_{*} - \alpha}^q$ lie in a normal neighborhood in $M$ of the point $x$ and in  Fermi neighborhoods of each of the surfaces $\Sigma_{t_{*} - \alpha}^p$ and $\Sigma_{t_{*} - \alpha}^q$. This can be achieved by choosing sufficiently small $\alpha > 0$ and reducing if necessary  the neighborhoods $\widetilde{U_p}, \widetilde{U_q}$, since the condition holds  for $\alpha =0$  and it is clear that that it also holds for all positive $\alpha$ sufficiently close to zero.

Let $\Phi_{t_{*} - \alpha}(p) = p_\alpha$ and $\Phi_{t_{*} - \alpha}(q)= q_\alpha$. By the triangle inequality we have that
$$d(\Sigma_{t_{*} - \alpha}^p, \Sigma_{t_{*} - \alpha}^q) \leqslant d(p_\alpha, q_\alpha)\leqslant d(p_\alpha,x) + d(q_\alpha,x) = 2\alpha,$$
where the last equality is satisfied since we work in the normal neighborhood of $x$ and we already have the geodesics of corresponding lengths, since for $\alpha=0$ such geodesics exist and this property depends continuously w.r.t. $\alpha$.

Suppose that $d(\Sigma_{t_{*} - \alpha}^p, \Sigma_{t_{*} - \alpha}^q) < 2\alpha$. We want to obtain a contradiction. We consider all geodesics as parameterized by their oriented lengths.

Recall that  Fermi coordinates are given by a diffeomorphism from a neighborhood of $\Sigma_{t_{*} - \alpha}^q$ in the normal bundle to a neighborhood $U \in M$ of $\Sigma_{t_{*} - \alpha}^q$ in the manifold,
$$ \mathcal{F}_{t_{*} - \alpha}^q : B_{r}N\Sigma_{t_{*} - \alpha}^q \xrightarrow{exp^{N\Sigma_{t_{*} - \alpha}^q}} U.$$
Moreover, $\mathcal{F}_{t_{*} - \alpha}^q$ is consistent with $\rho_{\Sigma_{t_{*} - \alpha}^q}$, i.e. in the neighborhood $U$ with  the Fermi coordinates, the geodesics generated by the mapping $exp^{N\Sigma_{t_{*} - \alpha}^q}$ coincide with the gradient trajectories of the function $\rho_{\Sigma_{t_{*} - \alpha}^q}$ see \cite[Chapter III.6]{Chavel}. We can assume that the diffeomorphism $\mathcal{F}_{t_{*} - \alpha}^q$ associates a pair to each point $b$ in the neighborhood of $U$. This pair consists of point $c$ on the surface $\Sigma_{t_{*} - \alpha}^q$ and the distance from the point $b$ to the surface $\Sigma_{t_{*} - \alpha}^q$, furthermore that distance is equal to $d(b,c)$. Moreover, this distance is realized by the gradient trajectory of the function $\rho_{\Sigma_{t_{*} - \alpha}^q}$, which is a minimal geodesic.

Note that the set $A := \{a \in \Sigma_{t_{*} - \alpha}^p \, | \,d(a,\Sigma_{t_{*} - \alpha}^q) < 2\alpha \}$ has interior points. Then, since the surface $\Sigma_{t_{*} - \alpha}^p$ lies in the Fermi neighborhood of the surface $\Sigma_{t_{*} - \alpha}^q$ and $\mathcal{F}_{t_{*} - \alpha}^q$ is open, we can choose an interior point $\bar{p}_{\alpha} \, \in A$, such that $\pi_{t_{*} - \alpha}^q \circ {\mathcal{F}_{t_{*} - \alpha}^q}^{-1}(\bar{p}_{\alpha})= \bar{q}_{\alpha} \in \Sigma_{t_{*} - \alpha}^q$, where $\pi_{t_{*} - \alpha}^q$ is the projection from $B_{r}N\Sigma_{t_{*} - \alpha}^q$ onto $\Sigma_{t_{*} - \alpha}^q$.

 Consider the geodesic $\gamma_1$ that connects $\bar{q}_{\alpha}$ and  $\bar{p}_{\alpha}$ which  existence is provided by  the properties of Fermi coordinates  described above. This geodesic minimizes the distance between $\bar{p}_{\alpha}$ and $\Sigma_{t_{*} - \alpha}^q$. Then the length  of  the arc of $\gamma_1$ between $\bar{q}_{\alpha}$ and $\bar{p}_{\alpha}$ is less than $2\alpha$. We consider the geodesic $\gamma_1$  parametrized in such a way that  $\gamma_1(0)=\bar{q}_{\alpha}$ and   $\gamma_1(d(\bar{q}_{\alpha},\bar{p}_{\alpha}))=\bar{p}_{\alpha}$. Note that since $\gamma_1$ is the minimizer of distance between $\bar{p}_{\alpha}$ and $\Sigma_{t_{*} - \alpha}^q$ , and we are working in the Fermi neighborhood of the surface $\Sigma_{t_{*} - \alpha}^q$, then $\dot{\gamma_1}(0)$ is  normal  to the surface $\Sigma_{t_{*} - \alpha}^q$, see \cite[Chapter III.6]{Chavel}. Let $|\gamma_1|$ denote the length arc of the curve $\gamma_1$ between $\bar{q}_{\alpha}$ and  $\bar{p}_{\alpha}$, and let $e = \frac{|\gamma_1|}{2}$.

The Fermi coordinates in the neighborhood  of $\Sigma_{t_{*} - \alpha}^p$ are defined by the open mapping $\mathcal{F}_{t_{*} - \alpha}^p$. The mapping $\mathcal{F}_{t_{*} - \alpha}^p$ is defined in a  similar to $\mathcal{F}_{t_{*} - \alpha}^q$ way. We need the midpoint $m_{\gamma_1} = \gamma_1(e)$ of the geodesic $\gamma_1$. The point $m_{\gamma_1}$ is an interior point for the domain of the Fermi coordinates with respect to the surface $\Sigma_{t_{*} - \alpha}^p$.  Then we obtain the point $\pi_{t_{*} - \alpha}^p \circ {\mathcal{F}_{t_{*} - \alpha}^p}^{-1}(m_{\gamma_1}) = h \in \Sigma_{t_{*} - \alpha}^p$, where $\pi_{t_{*} - \alpha}^p$ is defined in a similar to  $\pi_{t_{*} - \alpha}^q$ way. The same arguments as for $\gamma_1$ yield the existence of $\gamma_2$ which connects $h$ and $m_{\gamma_1}$ and it is the minimizer of distance  between $m_{\gamma_1}$ and $\Sigma_{t_{*} - \alpha}^p$. We can write $d(m_{\gamma_1},\Sigma_{t_{*} - \alpha}^p)=|\gamma_2|$, where $|\gamma_2|$ denotes the arc length of the curve $\gamma_2$ connecting the points $h$ and $m_{\gamma_1}$. We consider $\gamma_2$ parametrized as $\gamma_2(0) = h$ and $\gamma_2(d(m_{\gamma_1},h))= m_{\gamma_1}$.    The same argument as for $\gamma_1$ implies that $\dot{\gamma_2}(0)$ is the normal vector to the surface $\Sigma_{t_{*} - \alpha}^p$. Since $\gamma_2$ is the minimizer of distance between $m_{\gamma_1}$ and $\Sigma_{t_{*} - \alpha}^p$ and we can consider a half of $\gamma_1$ as the arc connecting $m_{\gamma_1}$ and $\Sigma_{t_{*} - \alpha}^p$, we obtain that $|\gamma_2| \leqslant e < \alpha$ by the construction.

Consider the mapping $\Phi_{t_{*} - (\alpha- e)}$. If $|\gamma_2| = e$, then  we obtain from the properties of $\gamma_1$ and $\gamma_2$ that
$$\Phi_{t_{*} - (\alpha- e)}(\Phi_{t_{*} - \alpha}^{-1}(\bar{q}_{\alpha}))= m_{\gamma_1} = \Phi_{t_{*} - (\alpha- e)}(\Phi_{t_{*} - \alpha}^{-1}(h)),$$
and $\Phi_{t_{*} - (\alpha- e)}$ is no more injective, it contradicts the definition of $t_{*}$ .
Therefore, we can assume that $|\gamma_2| < e$. Consider the surfaces $\Sigma^{p}_{t_*- (\alpha-e)}$ and $\Sigma^{q}_{t_*- (\alpha-e)}$ which are the images of the neighborhoods $U_p$ and $U_q$ under the mapping $\Phi_{t_{*} - (\alpha-e)}$. Then $m_{\gamma_1} \in \Sigma^{q}_{t_*- (\alpha-e)}$. Consider the point $\bar{q} = \Phi_{t_{*} - (\alpha-e)}(q)$. Recall that we consider the signed  distances to the hypersurface. Then, from the definition of the point $q$  we have that $\rho_{\Sigma^{p}_{t_*- (\alpha-e)}}(\bar{q}) > 0$. Since $\dot{\gamma_2}(0)$ is the normal vector to the surface $\Sigma_{t_{*} - \alpha}^p$, it follows that  $\gamma_2$ was generated by the map $\Phi_{t_{*} - (\alpha- e)}$. From the previous discussion and from the inequality  $|\gamma_2| < e$, we finally obtain that $\rho_{\Sigma^{p}_{t_*- (\alpha-e)}}(m_{\gamma_1}) < 0$. Take any path $\Gamma$ on the surface $\Sigma^{q}_{t_*- (\alpha-e)}$ connecting the points $\bar{q}$ and $m_{\gamma_1}$. Then there exists a point $w$ such that $\rho_{\Sigma^{p}_{t_*- (\alpha-e)}}(w)=0$, and hence at this point $\Phi_{t_{*} - (\alpha-e)}$ is not injective, this contradicts the choice of $t_{*}$.

Thus, the distance between $\Sigma_{t_{*} - \alpha}^p$ and $ \Sigma_{t_{*} - \alpha}^q$ is $2\alpha$. Recall that it follows from the definition of the point $x$ that $d(p_\alpha,x) + d(q_\alpha,x)=2\alpha$. We have a piecewise smooth curve consisting of two geodesics connecting the points $p_\alpha,x$ and the point $x,q_\alpha$, respectively. A piecewise smooth curve that is the minimizer of distance is the shortest curve, and therefore is a minimal geodesic and therefore is smooth. We obtain that $\Sigma^{p}_{t_*}$ and $\Sigma^{q}_{t_*}$ are tangent at the point $x$, and their normal vectors have opposite directions. The last statement follows from the fact that the normal vectors at point $x$ will be the velocity vectors of one geodesic, parametrized in different directions.

The third step. Now recall \cite[Lemma 8.20]{Gray}, which says that the mean curvature of the surface $\Sigma_{\tau}$ can be bounded from below in terms of the principal curvatures of the surface $\Sigma$ in the following way. There exists $\delta \in [0;1)$ depending on $\tau$ such that 
\begin{equation}
\begin{gathered}
\label{Bond}
H_{\Sigma_{\tau}}  \geqslant  \sum_{i=1}^n \frac { \varkappa_{i} + \sqrt{c}\,\tan(\sqrt{c}\,\tau)}{1- \frac{1}{\sqrt{c}}\tan(\sqrt{c}\,\tau)\varkappa_{i}} \geqslant  \,H_{\Sigma}+ \sum_{i=1}^n \frac{(\sqrt{c}+{\sqrt{c}}^{-1}\varkappa_{i}^2)\tan(\sqrt{c}\, \tau)}{1-\delta\cdot\varepsilon}  >  \\ > \sum_{i=1}^n \frac{(\sqrt{c}+{\sqrt{c}}^{-1}\varkappa_{i}^2)\tan(\sqrt{c}\, \tau)}{1-\delta\cdot\varepsilon} = \frac{(\sqrt{c}n+{\sqrt{c}}^{-1}\vert \vert B \vert \vert^2)\tan(\sqrt{c}\,\tau)}{1-\delta\cdot\varepsilon} > 0,
\end{gathered}
\end{equation}

It follows from the Weingarten formula for hypersurfaces and the divergence formula in an local orthonormal frame that 
for a hypersurface $S$ embedded in $M$ and for a point $l \in S$  the following formula holds
\begin{equation}\label{Formule6}
H_{S}(l)= -\sum_{i=1}^{n} \langle \nabla_{e_i}\nu; e_i\rangle = -\sum_{i=1}^{n} \langle \nabla_{e_i}\nu; e_i\rangle - \langle\nabla_{\nu}\nu; \nu\rangle = -\Div \nu(l).
\end{equation}
 We used here the fact that $\langle \nabla_{\nu}\nu; \nu\rangle = \frac{1}{2} \nu\langle\nu; \nu\rangle = \frac{1}{2} \nu1 = 0$.

Locally one has $\nu=\grad \rho_S$. Hence, formula \eqref{Formule6} can be rewritten as
\begin{equation}\label{Formule7}
H_{S}(l)=\Delta \rho_S(l).
\end{equation}

Consider again our surfaces $\Sigma^p_{t_{*}} $ and $\Sigma^q_{t_{*}} $. The mean curvatures of these surfaces at the point $x$ are positive, which follows from the estimate \eqref{Bond} of the mean curvature at the point $x$. It follows immediately, from formula \eqref{Formule7} applied to the surfaces $\Sigma^p_{t_{*}} $ and $\Sigma^q_{t_{*}} $, that
\begin{equation}\label{Formule8}
-\Delta \rho_{\Sigma^p_{t_{*}}}(x)<0<\Delta \rho_{\Sigma^q_{t_{*}}}(x).
\end{equation}
Then, by continuity, for a sufficiently small geodesic ball $B(x)$ with center at the point $x$, the following inequalities hold,
\begin{equation}\label{F358}
-\Delta \rho_{\Sigma^p_{t_{*}}}<0<\Delta \rho_{\Sigma^q_{t_{*}}}.
\end{equation}

It follows that $-\Delta (\rho_{\Sigma^p_{t_{*}}} + \rho_{\Sigma^q_{t_{*}}}) < 0$. In the domain $D=B(x)\cap\{\rho_{\Sigma^q_{t_{*}}}>0\}$ we have $\rho_{\Sigma^p_{t_{*}}} + \rho_{\Sigma^q_{t_{*}}} \geqslant 0$ and $\rho_{\Sigma^p_{t_{*}}}(x)=\rho_{\Sigma^q_{t_{*}}}(x)=0$. By the Hopf lemma \cite[Lemma 6.4.2]{Evans}, applied at the point $x$ as a boundary point of $D$, we obtain that either $\rho_{\Sigma^p_{t_{*}}}+\rho_{\Sigma^q_{t_{*}}}=c$ in  $D$, or $\nu_{D}(\rho_{\Sigma^p_{t_{*}}}+\rho_{\Sigma^q_{t_{*}}})(x) > 0$, where  $\nu_{D}$ denotes the exterior unit normal field to the boundary of the domain $D$ in $M$. Note that at the point $x$ we have that $\nu_{D}(x)=\nu(x)$, there $\nu$ indicates the unit normal field to $\Sigma^p_{t_{*}}$. The first formula given by the Hopf lemma  contradicts the strictness of inequality \eqref{F358}, and the second formula given by the Hopf lemma contradicts the fact that $\nu \rho_{\Sigma^p_{t_{*}}}(x) = -\nu \rho_{\Sigma^q_{t_{*}}}(x)=1$. This contradiction proves Lemma 2.1.1.
\end{proof}

\subsection{Simons Theory}

In this subsection we use the results first appeared in the paper \cite{Simons}, but we want to use the notation from \cite{Xin} . In particular, let
\begin{equation} \label{tildeB}
\widetilde{\mathcal{B}} = B \circ B^{*} \circ  B,
\end{equation}
where $B^{*} \in \Gamma(Hom(N\Sigma,T\Sigma \otimes T\Sigma))$ is the formal conjugate to $B \in \Gamma(Hom(T\Sigma \otimes T\Sigma , N\Sigma))$,
\begin{equation} \label{tildeR}
\widetilde{\mathcal{R}}(X,Y) = \sum_{i=1}^{n} [(\nabla^{M}_X R^{M})_{Y, e_{i}} e_i + (\nabla^{M}_{e_{i}} R^{M})_{X,e_{i}} Y]^{N},
\end{equation}
where $X,Y \in T\Sigma$ and $e_{1}...e_{n}$ is a local orthonormal frame in $T\Sigma$,
\begin{equation} \label{underR}
\begin{gathered}
\underline{\mathcal{R}}(X,Y) = \sum_{i=1}^{n} \, [ 2R^{M}_{Y, e_{i}}B(X,e_i) + 2R^{M}_{X, e_{i}}B(Y,e_i) - B(X,{R^{M}_{Y, e_{i}}e_i}^{T}) - \\
- B(Y,{R^{M}_{X, e_{i}}e_i}^{T}) + R^{M}_{B(X,Y),e_i}e_i - 2B(e_i,{R^{M}_{X, e_{i}}Y}^{T})]^{N}.
\end{gathered}
\end{equation}

Let us write the formula \cite[formula 1.6.4]{Xin} in the case of codimension~1,
\begin{equation}\label{Formule3333}
tr(\nabla^2 B)= -\widetilde{\mathcal{B}} + \widetilde{\mathcal{R}} + \underline{\mathcal{R}},
\end{equation} 
where
$$tr(\nabla^2 B)= \sum_{i=1}^n \nabla_{e_i}\nabla_{e_i} B - \nabla_{\nabla_{e_i} e_i} B,$$ 
and $e_{1}...e_{n}$ is a local orthonormal frame in $T\Sigma$.
Let us multiply the equality \eqref{Formule3333} by $B$ as an element of $\Gamma(T\Sigma \otimes T\Sigma , N\Sigma)$ using the natural scalar product. Estimating from below following the works of \cite{Xin} and \cite{Simons}, we obtain that
\begin{equation}\label{Formule334}
\langle tr(\nabla^2 B), B\rangle \geqslant -|B|^4 +  nc|B|^2+ \langle \widetilde{\mathcal{R}},B\rangle.
\end{equation}

Integrating both parts, we obtain
\begin{equation}\label{Formule335}
\int_{\Sigma} \langle tr(\nabla^2 B), B\rangle ds^g \geqslant \int_{\Sigma} |B|^2(nc - |B|^2) ds^g + \int_{\Sigma} \langle \widetilde{\mathcal{R}},B\rangle ds^g.
\end{equation}
From the definition of $ \widetilde{\mathcal{R}}$ in the case of codimension 1 and from Gauss–Codazzi equations, it follows that $\int_{\Sigma} \langle \widetilde{\mathcal{R}},B\rangle ds^g~\geqslant~0$, see \cite{Simons}. This implies that
\begin{equation}\label{Formule336}
 \int_{\Sigma} \langle tr(\nabla^2 B), B\rangle ds^g  \geqslant \int_{\Sigma} |B|^2(nc - |B|^2) ds^g.
\end{equation} 

We use the equality $ \int_{\Sigma} \langle tr(\nabla^2 B), B\rangle ds^g = - \int_{\Sigma}|\nabla B|^2 ds^g$ from the book \cite[§1.6]{Xin}, where the norm of RHS is taken with respect to the scalar product in $\Gamma(Hom(T\Sigma \otimes T\Sigma \otimes T\Sigma, N\Sigma))$. Next, we obtain
\begin{equation}\label{Formule337}
0 \geqslant - \int_{\Sigma}|\nabla(B)|^2 ds^g \geqslant \int_{\Sigma} |B|^2(nc - |B|^2) ds^g.
\end{equation}

It follows that either
\begin{equation}\label{N666}
|B| \geqslant \sqrt{nc},
\end{equation}
or the hypersurface $\Sigma$
is totally geodesic. We exclude the case of a totally geodesic submanifold, since its sectional curvature will be equal to the sectional curvature of the manifold\,$M$. This follows from the fact that $\secc_\Sigma(u,v) = K_{exp(u,v)}$, where $K$ is the Gaussian curvature of the surface generated by the vectors $u$ and $v$ $\in T_{p}\Sigma$ under the exponential map. Since the surfaces are totally geodesic, the images of the corresponding surfaces coincide, so the Gaussian curvatures coincide too. Then it is easy to see that $\Ricc_\Sigma \geqslant c(n-1)$. Let us recall the Lichnerowicz theorem, see \cite{Lichnerowicz}.

\newtheorem*{Theorem}{Lichnerowicz's Theorem}
\begin{Theorem}
Let $V$ be an orientable compact manifold of dimension $n$ without boundary  with Ricci curvature bounded below as
$$\Ricc_V \geqslant (n-1)C \geqslant 0,$$ then the first positive eigenvalue of the Laplace-Beltrami operator is bounded below,
\begin{center}
$\lambda^{V}_{1} \geqslant nC$.
\end{center}
\end{Theorem}

Applying this theorem to $\Sigma$, we obtain in the case of a totally geodesic hypersurface $\Sigma$ that
\begin{equation}\label{Formule333}
\lambda_1 \geqslant nc,
\end{equation}
which is much better than the estimate we want to obtain.

Therefore, we exclude the case of totally geodesic hypersurfaces. Further  we assume that for the hypersurfaces $\Sigma$ we have  $\Lambda = \sup\limits_{\Sigma}{\| B \|} \geqslant \sqrt{nc}$.

\section{Bounds for eigenvalues}
\subsection{The result by Choi and Wang}

In this section, we recall the proof of the Choi and Wang inequality. In the following sections we improve this inequality in order to prove our main result \eqref{Main}.

We need the following formula first obtained in \cite{Reilly} in order to present the results of \cite{ChoiWang}.
\newtheorem*{lemma8}{Proposition 3.1.1}
\begin{lemma8}[Reilly's Formula] Let $(X,g)$ be a smooth oriented Riemannian manifold with boundary $\partial X$ and the unit normal field to the boundary directed inward. Let $dv^{g}$ be the volume form on $X$, and let $ds^{g}$ be the volume form on $\partial X$. Let $H_{\partial X}$ be the mean curvature of $\partial X$ consistent with the orientation of $\partial X$. Then the following formula holds,
\begin{equation}
\begin{gathered}
\begin{gathered}
\label{Raileigh1}
\int_{X}((\Delta u)^2- \|\Hess u\|^2)dv^g = \int_{X}\Ricc_{X}(\grad u,\grad u)dv^g + \\ +  \int_{\partial X}(\Delta^{\partial X}u - Hu_{\nu})u_\nu ds^g 
+ \int_{\partial X}\langle \grad^{\partial X} u , \grad^{\partial X} u_\nu \rangle ds^g -  \\ - \int_{\partial X}B(\grad^{\partial X} u , \grad^{\partial X} u)ds^g,
\end{gathered}
\end{gathered}
\end{equation}
where $u_{\nu} = \nu u$.
\end{lemma8}

Consider again $\Sigma$ and $M$ from the previous sections. It is important to note that since $\Ricc_{M} \geqslant cn$,  the first Betti number $b_1(M) = 0$, see Bochner \cite{Bochner}. From the Poincaré duality and the fact that $H^1(M)= Hom(\pi_1(M),\mathbb{Z})$, see \cite{Hatcher}, we can conclude  that there is no torsion in the corresponding (co)homology group of oriented manifolds.  Combining the previous two facts we obtain $H_{n}(M) = 0$.  Applying the long exact sequence of pair and the excision theorem to $\Sigma \hookrightarrow M$, we obtain that $\Sigma$ divides $M$ into two domains. Let us denote them by $M_1$ and $M_2$. Further, we assume that the normal field on $\Sigma$ is chosen in such a way that $ - \int_{\Sigma} B(\grad^{\Sigma} u , \grad^{\Sigma} u)ds^g \geqslant 0$. Without loss of generality, we assume that the normal field is directed inward $M_1$.

Let $\Psi$ be an eigenfunction corresponding to the first eigenvalue of the hypersurface $\Sigma$, normalized by the condition $\|\Psi\|_{L^{2}(\Sigma)}=1$. We define the harmonic extension $u$ of the function $\Psi$ to $M_1$, i.e. the unique solution of the Cauchy problem 
\begin{equation}\label{Diffur}
    \begin{cases}
    \Delta u = 0  |_{M_1}, \\
    u=\Psi  |_{\Sigma}.
    \end{cases}
\end{equation}

Substitute $u$ into the Rayleigh formula. We use the minimality of $\Sigma$ and the fact that we chose the orientation so that $$ - \int_{\Sigma} B(\grad^{\Sigma} u , \grad^{\Sigma} u)ds^g \geqslant 0,$$ and we get that

\begin{equation}
\begin{gathered}
\begin{gathered}
\label{Formule12}
\int_{M_1}((\Delta u)^2- \|\Hess u\|^2)dv^g \geqslant   cn\int_{M_1}|\grad u|^2dv^g  + \int_\Sigma(\Delta^{\Sigma} u)u_\nu ds^g + \\
+ \int_{\Sigma} \langle \grad^{\Sigma} u , \grad^{\Sigma} u_\nu \rangle ds^g  = cn\int_{M_1}|\grad u|^2dv^g + 2 \int_\Sigma(\Delta^{\Sigma}u )u_\nu ds^g =
\\ = nc\int_{M_1}|\grad u|^2dv^g + 2\lambda_1\int_\Sigma u_\nu u ds^g.
\end{gathered}
\end{gathered}
\end{equation}

Using Green's formula and the equation $\Delta u = 0 |_{M_1} $, we obtain that
\begin{equation}
\label{Formule13}
\int_{\Sigma}u_\nu u ds^g = \int_{M_1} -|\grad u|^2 + u\Delta u dv^g= -\int_{M_1} |\grad u|^2 dv^g.
\end{equation} 
Substituting this into the previous inequality, we obtain that
\begin{equation}
\label{Nain}
2\left(\lambda_1 - \frac{cn}{2}\right) \int_{M_1} |\grad u|^2 dv^g \geqslant \int_{M_1} \|\Hess u\|^2 dv^g \geqslant 0.
\end{equation}
Finally, we obtain the inequality first proved in \cite{ChoiWang},
$$\lambda_1(\Sigma) \geqslant \frac {cn}{2}.$$

\subsection{Proof of the first auxiliary proposition}
We need to introduce several constants. Unlike the work  \cite{SyreSpruckDuncan}, we do not try  to introduce them immediately, but only step by step.

Recall that the hypersurface $\Sigma_{t}$ is embedded for $|t| \in [0;\frac{1}{\sqrt{c}}\arctan(\sqrt{c}\,\Lambda^{-1}))$. Consider a parameter $\varepsilon \in (0;\frac{\Lambda}{2})$, we will fix its exact value later. Let $D_{\varepsilon}=\frac{1}{\sqrt{b}}\arctan(\sqrt{c}\,\varepsilon\,\Lambda^{-2})$. Then $D_{\varepsilon} < \frac{1}{\sqrt{c}}\arctan(\sqrt{c}\,\Lambda^{-1})$, and for any $ t \in [-D_{\varepsilon};D_{\varepsilon}]$ the surface
$\Sigma_{t}$ is an embedded submanifold.

\newtheorem*{lemma2}{Lemma 3.2.1}
\begin{lemma2}
[on the upper bound for the mean curvature]For any $ t \in [-D_{\varepsilon};D_{\varepsilon}]$ the mean curvature of the surface $\Sigma_{t}$ can be  estimated as follows,
$$H_{\Sigma_{t}} \leqslant \left(\frac{\varepsilon}{\sqrt{b} - \sqrt{c}\frac{\varepsilon}{\Lambda}}\right)\left(1 + \frac{bn}{\Lambda^2}\right)\sqrt{c}.$$
\end{lemma2}
Denote the RHS by $\widetilde{\varepsilon}$.
\begin{proof}
Taking a sum  in formula \eqref{Napp} over all $i$ we get that
\begin{equation}
\begin{gathered}
\begin{gathered}
\label{Formule3}
H_{\Sigma_{t}}  \leqslant  \sum_{i=1}^n \frac { \varkappa_{i} + \sqrt{b}\,\tan(\sqrt{b}\,t)}{1- \frac{1}{\sqrt{b}}\tan(\sqrt{b}\,t)\varkappa_{i}} \leqslant   \,H_{\Sigma} + \sum_{i=1}^n \frac{\frac{1}{\sqrt{b}}(b+\,\varkappa_{i}^2)\tan(\sqrt{b}\, t)}{1- \frac{1}{\sqrt{b}}\tan(\sqrt{b}\,t)\varkappa_{i}} \leqslant \\ \leqslant \frac{(bn + \Lambda^2)\tan(\sqrt{b} \, t)}{\sqrt{b} - \sqrt{c}\,\frac{\varepsilon}{\Lambda}} \leqslant 
\frac{1}{  \sqrt{b} - \sqrt{c}\,\frac{\varepsilon}{\Lambda}} \cdot \Lambda^2 \left(1+ \frac{bn}{\Lambda^2}\right)\cdot \tan(t\sqrt{b}) \leqslant \\ \leqslant \left(\frac{\varepsilon}{\sqrt{b} - \sqrt{c}\,\frac{\varepsilon}{\Lambda}}\right)\left(1 + \frac{bn}{\Lambda^2}\right)\sqrt{c}.
\end{gathered}
\end{gathered}
\end{equation}
\end{proof}

Denote $M_1^t$ = $\{p\in M_1 | d(p, \Sigma) \geqslant t\}$.
Note that for $t \in [0;D_\varepsilon]$ the boundary of the domain $M_1^t$ is $\Sigma_t$, i.e. a smooth embedded submanifold in $M$.

We present the following Lemma without proof, since it is similar to the proof of \cite[Lemma 3.6]{SyreSpruckDuncan}.
\newtheorem*{lemma3}{Lemma 3.2.2}
\begin{lemma3}
Let $0<\varepsilon<\frac{\Lambda}{2}$, let $v$ be a smooth function defined on $\overline{M_1}$, and let $\beta>0$ be a positive number and $t \in [0;D_\varepsilon]$. Then the following inequality holds,
 \begin{equation}
\begin{gathered}
\label{FormuleSire}
\int_{\Sigma}|\grad v|^2ds^g \leqslant \int_{\Sigma_{t}} |\grad v|^2ds^g  + \\ +(\widetilde{\varepsilon} + \beta) \int_{M_1 \setminus M_1^t} |\grad v|^2 dv^g + \beta^{-1}
\int_{M_1 \setminus M_1^t}|\Hess v|^{2} dv^g,
\end{gathered}
\end{equation}
\end{lemma3}

where  $\widetilde{\varepsilon}$ in \eqref{FormuleSire} is defined below Lemma 3.2.1.

\newtheorem*{lemma4}{Lemma 3.2.3}
\begin{lemma4}
[on the estimate of the integral over a surface by the integral over a domain]
The function $u$ defined by equation \eqref{Diffur} satisfies the following inequality,
\begin{equation}
\label{Formule12}
\sqrt{2nc}\int_{M_1}|\grad u|^2dv^g \leqslant \int_{\Sigma}|\grad u|^2ds^g.
\end{equation}
\begin{proof}
Recall that we consider the first non-constant eigenfunction $\Psi$ on the surface $\Sigma$, whose extension to the domain $M_1$ is $u$. Note that we consider $\Psi$  normalized as $\|\Psi\|_{L^2(\Sigma)} = 1$. We denote the unit normal field on $\Sigma$ directed inward  $M_1$ by $\nu$. Using  Green's formula and  the Cauchy-Schwarz inequality, we obtain
 \begin{equation}
 \begin{gathered}
\label{FormuleIneq1}
\left(\int_{M_1}|\grad u|^2dv^g\right)^2 = \left(\int_{\Sigma}u_\nu uds^g\right)^2  \leqslant \left(\int_{\Sigma}u_\nu^2 ds^g\right) \left(\int_{\Sigma} u^2 ds^g\right) = \\ = \int_{\Sigma}u_\nu^2 ds^g.
\end{gathered}
\end{equation}
On the other hand, we have the equality
 \begin{equation}
\label{FormuleEq1}
 \int_{\Sigma}u_\nu^2 ds^g =  \int_{\Sigma}|\grad u|^2 ds^g - \int_{\Sigma} |\grad^{\Sigma}u|^2ds^g  = \int_{\Sigma}|\grad u|^2 ds^g - \lambda_1,
\end{equation}
where the last equality follows from the variational description of the eigenvalues.

Substituting  equality \eqref{FormuleEq1} into  inequality \eqref{FormuleIneq1} and applying $a^2 + b^2 \geqslant 2ab,$ we obtain that
 \begin{equation}
 \begin{gathered}
\label{Formulip}
\int_{\Sigma}|\grad u|^2ds^g \geqslant \lambda_1+\left(\int_{M_1}|\grad u|^2 dv^g\right)^2 \geqslant \\ \geqslant  2\sqrt{\lambda_1}\int_{M_1}|\grad u|^2 dv^g \geqslant \sqrt{2nc}\int_{M_1}|\grad u|^2dv^g.
\end{gathered} 
\end{equation}
The last inequality in \eqref{Formulip} follows from the estimate by Choi and Wang. This finishes the proof.
\end{proof}
\end{lemma4}
Combining the results of the previous two Lemmas and using the idea that the integral of a non-negative function only increases as the domain increases, we obtain that
\begin{equation}
\label{AfterLemm}
(\sqrt{2nc} -\widetilde{\varepsilon}-\beta)\int_{M_1}|\grad u|^2dv^g \leqslant \int_{\Sigma_t}|\grad u|^2ds^g + \beta^{-1}\int_{M_1}|\Hess  u|^2dv^g.
\end{equation}

Let  $\gamma := \sqrt{2nc} -\widetilde{\varepsilon}-\beta$, $\delta := \frac{n\,c}{\sqrt{b}}\arctan(\frac{\varepsilon}{n\sqrt{c}})$ and $T:= \frac{\delta}{2\Lambda^2}$. Let us fix $\beta$ and $\varepsilon$ such that $\gamma > 0$. Then $\varepsilon, \widetilde{\varepsilon}, \beta, \gamma, \delta, T$ are now constants. Remark that at this moment the choice of $\beta$ and $\varepsilon$ is not unique. Latter in the proof of theorem 1.1.1 we choose some exact value of these constants. 

\newtheorem*{theorem2}{Proposition 3.2.4}
\begin{theorem2}
[first auxiliary proposition]
Let u be a function defined by  equation \eqref{Diffur}, $\beta, \gamma$ and $\delta$ be constants defined above. Then the following inequality holds,
\begin{equation}
\label{FormuleFirstPr}
\int_{M_1}|\grad u|^2dv^g \leqslant \frac{2\Lambda^2}{\delta \gamma} \int_{M_1^T\setminus M_1^{2T}}|\grad u|^2dv^g + \frac{1}{\gamma\cdot\beta}\int_{M_1}|\Hess  u|^2dv^g.
\end{equation}
\begin{proof}
First, we remark that the arctangent satisfies the following inequality, $$k\arctan(y) \leqslant \arctan(ky),$$ for  $y > 0$ and for $k\leqslant 1$. Thus, if $\Lambda \geqslant \sqrt{nc}$, then the estimate $\frac{\delta}{\Lambda^2} \leqslant \frac{1}{\sqrt{b}}\arctan(\frac{\varepsilon \sqrt{c}}{\Lambda^2}) = D_{\varepsilon}$ holds. Hence, for any $t \in [T;2T]$ formula \eqref{AfterLemm} also holds.  We integrate inequality \eqref{AfterLemm} over $[T;2T]$. Since $\Sigma_t$ is an embedded submanifold, Fubini's theorem implies that
\begin{equation}
\label{Formule12}
\frac{\delta \gamma}{2\Lambda^2}\int_{M_1}|\grad u|^2dv^g \leqslant \int_{M_1^T\setminus M_1^{2T}}|\grad u|^2dv^g + \frac{\delta}{2\Lambda^2\cdot \beta}\int_{M_1}|\Hess  u|^2dv^g.
\end{equation}

Now inequality  \eqref{FormuleFirstPr}  follows immediately .

\end{proof}
\end{theorem2}

\subsection{Proof of the second auxiliary proposition}

\newtheorem*{lemma5}{Lemma 3.3.1}
\begin{lemma5}
Let $M$ be a closed Riemannian manifold with Ricci curvature bounded below by 
\begin{equation}
\label{ricric}
\Ricc_M \geqslant cn. 
\end{equation}
For a harmonic function $w$ on $M$, we have the following inequality 
\begin{equation}
\label{Formula23}
-\Delta|\grad w|^2 \geqslant 2|\Hess w|^2 + 2cn|\grad w|^2.
\end{equation}
\begin{proof}
Let us write the Bochner's formula
\begin{equation}
\label{Formula22}
-\Delta|\grad w|^2 = -2\langle \grad \Delta w,\grad w\rangle + 2|\Hess w|^2 + 2\Ricc(\grad w,\grad w).
\end{equation}
Then  the bound \eqref{ricric} for the Ricci curvature imply the desired inequality.
\end{proof}
\end{lemma5}

\newtheorem*{lemma6}{Lemma 3.3.2}
\begin{lemma6}
Let $ \Omega \subset M$ where $M$ is a manifold with Ricci curvature bounded below, $\Ricc_M \geqslant cn > 0$, and $w$ is a smooth harmonic function in the domain $\Omega$. We denote by $\Omega^t$ the set $\{p\in \Omega | d(p,\partial \Omega) \geqslant t\}$. Note that for a sufficiently small $t$ the set $\Omega^t$ is domain with a smooth boundary. Further, we suppose that $t$ is small enough for $\Omega^t$ to be a domain with smooth boundary. 
Then the following inequality holds,
\begin{equation}
\label{Formule227}
\int_{\Omega^{2t}}|\grad w|^2dv^g   \leqslant \frac{1}{cn-1}\cdot t^{-2} \int_\Omega |\Hess w|^2dv^g.
\end{equation}
\begin{proof}
Let us take a smooth non-negative function $\rho^2$. We multiply the inequality from Lemma 3.3.1 by $\rho^2$. We integrate it and obtain
\begin{equation}
\label{Formula224}
\begin{gathered}
\begin{gathered}
\int_{\Omega}\rho^2(|\Hess w|^2 + cn|\grad w|^2)dv^g \leqslant -\frac{1}{2}\int_{\Omega}\rho^2 \Delta|\grad w|^2dv^g = \\ 
 = -\int_\Omega \rho \langle \grad \rho,\grad |\grad w|^2\rangle dv^g = -2\int_\Omega \rho \Hess w(\grad w,\grad \rho) dv^g \leqslant \\
\leqslant \int_\Omega\rho^2|\grad w|^2dv^g + \int_\Omega |\grad \rho|^2|\Hess w|^2dv^g.
\end{gathered}
\end{gathered}
\end{equation}
This inequality could be rewritten as 
\begin{equation}
(cn-1)\int_{\Omega}\rho^2|\grad w|^2dv^g + \int_\Omega(\rho^2 - |\grad \rho|^2)|\Hess w|^2dv^g  \leqslant 0.
\end{equation}
This implies
\begin{equation}
(cn-1)\int_{\Omega}\rho^2|\grad w|^2dv^g -  \int_\Omega |\grad \rho|^2|\Hess w|^2dv^g  \leqslant 0.
\end{equation}
It follows that
\begin{equation}
\label{Formula225}
\int_{\Omega}\rho^2|\grad w|^2dv^g   \leqslant \frac{1}{cn-1}\int_\Omega |\grad \rho|^2|\Hess w|^2dv^g.
\end{equation}
Finally, we choose $\rho$ such that $\rho \equiv 0|_{\Omega\setminus\Omega^t}$, $\rho \equiv 1|_{\Omega^{2t}}$ and \\
$|\grad(\rho)| \leqslant (1+\epsilon)t^{-1}$, then
\begin{equation}
\label{Formule226}
\int_{\Omega^{2t}}|\grad w|^2dv^g   \leqslant \frac{(1+\epsilon)^2}{cn-1}\cdot t^{-2} \int_\Omega |\Hess w|^2dv^g.
\end{equation}
Taking the limit as $\epsilon \to 0$, we get
\begin{equation}
\label{Formule227}
\int_{\Omega^{2t}}|\grad w|^2dv^g   \leqslant \frac{1}{cn-1}\cdot t^{-2} \int_\Omega |\Hess w|^2dv^g.
\end{equation}
\end{proof}
\end{lemma6}

\newtheorem*{theorem3}{Proposition 3.3.3}
\begin{theorem3}
[second auxiliary proposition]
Let u be a function defined by equation \eqref{Diffur}, and let $\delta,\gamma$ and $\Lambda$ be constants defined above. Then the following estimate of the integral holds,
\begin{equation}
\label{SecondBond}
\int_{M_1^{T}}|\grad w|^2dv^g \leqslant \frac{1}{cn-1}\cdot \frac{16\Lambda^2}{\delta^2} \int_{M_1} |\Hess w|^2dv^g.
\end{equation}
\begin{proof}
We apply Lemma 3.3.2. Take $\Omega=M_1, w=u$ and
$t=\frac{T}{2}(T=\frac{\delta}{2\Lambda^2})$, then by Lemma 3.3.2 we obtain the inequality \eqref{SecondBond}.
\end{proof}
\end{theorem3}

\subsection{Proof of Theorem 1.1.1}
Recall that we consider  a compact orientable Riemannian manifold $M$ of dimension  $n+1$  without boundary, $M$  satisfies condition \eqref{Condition}, and $\Sigma$ is a closed orientable hypersurface minimally embedded in $M$.

We defined the harmonic extension $u$ of the first eigenfunction $\Psi$ of the hypersurface $\Sigma$ to $M_1$ as the unique solution of the problem \eqref{Diffur}, where $M_1$ is one of the connected components of $M\setminus \Sigma$.

For the function $u$, we obtained two estimates \eqref{FormuleFirstPr} and \eqref{SecondBond}. Substituting \eqref{SecondBond} into \eqref{FormuleFirstPr}, we have
\begin{equation}
\label{Formule231}
\int_{M_1}|\grad u|^2dv^g \leqslant \left(\frac{32\Lambda^6}{(cn-1)\delta^3 \gamma}+ \frac{1}{\beta\gamma}\right) \int_{M_1}|\Hess u|^2dv^g.
\end{equation}
This estimate of the Hessian via the gradient permits us to improve Choi-Wang result as we promised in Subsection 3.1. Substitution of \eqref{Formule231} into $\eqref{Nain}$ yields that
\begin{equation}\label{Formule242}
2\left(\lambda_1 - \frac{cn}{2}\right) \int_{M_1} |\grad u|^2 dv^g \geqslant \frac{1}{\frac{32\Lambda^6}{(cn-1)\delta^3 \gamma}+ \frac{1}{\beta\gamma}} \int_{M_1}|\grad u|^2dv^g.
\end{equation}
Dividing both sides by $\int_{M_1}|\grad u|^2dv^g$, we get
\begin{equation}\label{Formule243}
\lambda_{1}(\Sigma) \geqslant \frac{cn}{2} + \frac{1}{\frac{64\Lambda^6}{(cn-1)\delta^3 \gamma}+ \frac{2}{\beta\gamma}}.
\end{equation}
To obtain the estimate \eqref{Main}, we introduce constants
\begin{align}
  \begin{aligned}
  a_n=\frac{(cn-1)\delta^3\gamma}{64},
  \end{aligned}
  &&
  \begin{aligned}
  b_n=\frac{(cn-1)\delta^3}{32\beta},
  \end{aligned}
 \end{align}
and fix
\begin{align}
  \begin{aligned}
    \varepsilon= \frac{2(2\sqrt{b}-\sqrt{c})\sqrt{n}}{\sqrt{b}\frac{bc+1}{b}3},
  \end{aligned}
  &&
  \begin{aligned}
  \beta=\frac{\sqrt{nc}}{20}.
  \end{aligned}
 \end{align}

Then $\gamma\geqslant \frac{3\sqrt{nc}}{100}$, and our main constants are estimated as follows,
\begin{equation}\label{Formule233}
a_n \geqslant \frac{3(cn-1)(c \cdot n)^{7/2}}{b^{3/2}\,6400}\arctan^3\left(\frac{\varepsilon}{\sqrt{c}\, n}\right),
\end{equation}
\begin{equation}\label{Formule234}
b_n \leqslant \frac{5(cn-1)(c\cdot n)^{5/2}}{b^{3/2}\,8}\arctan^3\left(\frac{\varepsilon}{\sqrt{c}\, n}\right).
\end{equation}
As a result, we can write the final estimate
\begin{equation}\label{Main2}
\lambda_{1}(\Sigma) \geqslant \frac{cn}{2} + \frac{a_n}{\Lambda^6 + b_n}.
\end{equation}
\begin{center}
\textbf{Acknowledgment}
\end{center}
The author is grateful to Alexei Penskoi for attaching his attention to this problem, fruitful discussions and help in preparation of this paper.

\bibliographystyle{alpha}
\bibliography{LowerBound.org.tug.org.tug}
\end{document}